\newcommand{\R}{\mathbb{R}}
\newcommand{\N}{\mathbb{N}}
\newcommand{\mc}[1]{\mathcal{#1}}
\newcommand{\bx}{\bs{x}}
\newcommand{\bs}[1]{\boldsymbol{#1}}
\newcommand{\bsone}{\boldsymbol{1}}
\newtheorem{theorem}{Theorem}
\newtheorem{definition}{Definition}
\newtheorem{example}{Example}
\newtheorem{remark}{Remark}
\newtheorem{assumption}{Assumption}
\newacronym{GNEP}{GNEP}{generalized Nash equilibrium problem}
\newacronym{NEP}{NEP}{ Nash equilibrium problem}
\newacronym{MI-NEP}{MI-NEP}{mixed-integer Nash equilibrium problem}
\newacronym{MI}{MI}{mixed-integer}
\newacronym{VI}{VI}{variational inequality}
\newacronym{ICRF}{ICRF}{integer-compatible regularization function}
\newacronym{BR}{BR}{best-response}
\newacronym{KKT}{KKT}{Karush--Kuhn--Tucker}
\newacronym{w.r.t.}{w.r.t.}{with respect to}
\newacronym{LF-NE}{LF-NE}{leader-follower Nash equilibrium}
\newacronym{MPEC}{MPEC}{mathematical program with equilibrium constraint}
\newacronym{MPECs}{MPECs}{mathematical program with equilibrium constraints}
\newglossaryentry{GNE}
{
	name={GNE},
	description={generalized Nash equilibrium},
	first={\glsentrydesc{GNE} (\glsentrytext{GNE})},
	plural={generalized Nash equilibrium},
	descriptionplural={generalized Nash equilibria},
	firstplural={\glsentrydescplural{GNE} (GNE)}
}
\newglossaryentry{MI-NE}
{
	name={MI-NE},
	description={mixed-integer Nash equilibrium},
	first={\glsentrydesc{MI-NE} (\glsentrytext{MI-NE})},
	plural={mixed-integer Nash equilibrium},
	descriptionplural={mixed-integer Nash equilibria},
	firstplural={\glsentrydescplural{MI-NE} (MI-NE)}
}
\newglossaryentry{MI-GNE}
{
	name={MI-GNE},
	description={mixed-integer generalized Nash equilibrium},
	first={\glsentrydesc{MI-GNE} (\glsentrytext{MI-GNE})},
	plural={mixed-integer generalized Nash equilibria},
	descriptionplural={mixed-integer generalized Nash equilibria},
	firstplural={\glsentrydescplural{MI-GNE} (MI-GNE)}
}
\newcommand{\st}{\textrm{s.t.}}
\newcommand{\set}[1]{\{#1\}}
\newcommand{\defset}[3][\defsep]{\set{#2#1#3}}
\newcommand{\Defset}[3][\defsep]{\Set{#2#1#3}}
\newcommand{\define}{\mathrel{{\mathop:}{=}}}
\newcommand{\scrX}{\mathcal{X}}
\newcommand{\scrZ}{\mathcal{Z}}
\newcommand{\NE}{\mathbf{NE}}
\title{\LARGE \bf
  A Gauss-Seidel method for solving multi-leader-multi-follower games
}
\author{Barbara Franci$^1$, Filippo Fabiani$^2$, Martin Schmidt$^3$, and Mathias Staudigl$^4$
  \thanks{$^1$ Department of Advanced Computing Sciences, Maastricht University, NL--6200 MD, Maastricht, The Netherlands {\tt \footnotesize (b.franci@maastrichtuniversity.nl)}.}
  \thanks{$^2$ IMT School for Advanced Studies Lucca, Piazza San Francesco 19, 55100 Lucca, Italy ({\tt \footnotesize filippo.fabiani@imtlucca.it}).}
  \thanks{$^3$ Department of Mathematics at Trier University, Universitätsring 15,
    54296 Trier, Germany {\tt\footnotesize(martin.schmidt@uni-trier.de)}. }
  \thanks{$^4$ University of Mannheim, Department of Mathematics, B6, 68159 Mannheim,
    Germany {\tt\footnotesize(mathias.staudigl@uni-mannheim.de)}. }
}
\begin{document}

\maketitle
\thispagestyle{empty}
\pagestyle{empty}

\begin{abstract}
We design a computational approach to find equilibria in a class of
Nash games possessing a hierarchical structure. By using tools from
mixed-integer optimization and the characterization of variational
equilibria in terms of the Karush--Kuhn--Tucker conditions, we propose
a mixed-integer game formulation for solving this challenging
class of problems. Besides providing an equivalent reformulation, we design a
proximal Gauss--Seidel method with global convergence guarantees in
case the game enjoys a potential structure. We finally corroborate the
numerical performance of the algorithm on a novel instance of the
ride-hail market problem.


\end{abstract}

\section{Introduction}
\label{sec:introduction}

In this work, we are interested in solving a class of hierarchical
games involving a set of $N$ leaders who are strategically attached to
$M_{i}$ followers, where $i\in[N]=\{1,\ldots,N\}$. The leaders
are in turn involved in a Nash game among the other leading
agents. Mathematically, we model each leader $i$ as a rational
decision-maker with loss function $F^{i}(x^{i},\bs x^{-i},\bs y^{i})$,
depending on a private decision~$x^{i}$, a tuple~$\bs x^{-i}$
representing the opponent leaders' decisions, and a
tuple~$\bs{y}^{i}$, which constitutes an~$M_{i}$-tuple
$\bs y^{i}\coloneqq(y_{1}^{i},\ldots,y^{i}_{M_{i}})$ describing the lower-level
agents' decisions.
We are thus interested in computing an equilibrium in a game in which
leader~$i$, subject to local constraints $\mathcal{X}_i$, solves
\begin{equation}
  \label{eq:Main}
  \min_{x^{i},\bs y^{i}} \quad F^{i}(x^{i},\bs x^{-i},\bs y^{i})
  \text{ s.t. }
  x^{i} \in \scrX^{i}, \, \bs{y}^{i}\in\NE^{i}(x^{i}),
\end{equation}
where the followers' decision strategy $\bs y^{i}$ constitutes an equilibrium
of a \gls{GNEP}, parametric in the leader's decision $x^i$, i.e., $\bs
y^{i} \in \NE^{i}(x^{i})$.

Such models have attracted great interest in control and operations
research \cite{sherali1984multiple,hintermuller2014several}, recently
also from a stochastic
perspective~\cite{cui2023computation,cui2023regularized}, and belong to the family of \emph{multi-leader-multi-follower games}.
Under some assumptions, this class of problems
reduces to mathematical programs with equilibrium constraints (MPECs)
\cite{luo1996mathematical} when there is only one leader who controls
the equilibrium correspondence of a set of
followers \cite{hintermuller2011smooth,hintermuller2014several,de2023bilevel,Aussel:2020aa,Pang:2005aa}. By adopting the terminology of this literature, we refer to
problem~\eqref{eq:Main} as the \emph{optimistic formulation} of the
multi-leader-multi-follower game \cite{fabiani2021local,kulkarni2015existence}.

Although multi-leader-multi-follower games are a natural model for
several applications in engineering systems, to date existence
results for equilibria are restricted to special classes of games
\cite{Pang:2005aa}. A commonly made assumption is that for
every upper-level decision $x^{i}$, the set of equilibria $\NE^i(x^{i})$ is a
singleton. This is quite restrictive, especially in the presence of a
GNEP at the lower level.
Moreover, a challenging problem is the lack
of analytically tractable characterizations of the equilibrium
correspondence $\NE^{i}(x^{i})$. For polynomial data, it possesses a
semi-algebraic structure, being defined as a set of polynomial
equations and inequalities \cite{BlumeZame94}, which is however not
enough to develop tractable numerical methods for computing an
equilibrium solution. In fact, the Nash correspondence will exhibit
nonconvexities, inherited from the complementarity conditions
characterizing first-order optimal points. To fill this gap, our main
contributions can be summarized as follows:
\begin{itemize}
\item We develop a global, optimization-based reformulation of the equilibrium
conditions for the proposed multi-leader-multi-follower game. Such a
reformulation brings the hierarchical game into a large-scale,
nonlinear, mixed-integer optimization problem;
\item To ensure that the problem is
numerically tractable, we recast the leader's game as a potential
game. While potential games are a relatively small class of games,
they are a classical choice in mixed-integer games
\cite{Fabiani_et_al:2022,sagratella2017algorithms,sagratella2019generalized}. Building
upon such an optimization reformulation, we propose an
iterative method based on a Gauss--Seidel iteration inspired by our
previous work on a standard (i.e., single-level) Nash equilibrium
problem~\cite{Fabiani_et_al:2022};
\item We propose a novel formulation for the ride-hail market
  problem. Inspired by
  \cite{fabiani2022stochastic,fabiani2023personalized}, the model
  described here and the equilibrium it provides are meant to help the
  platforms (i.e., the leaders) in estimating the profit and thereby
  supporting its pricing decision process, anticipating the decision
  of the drivers (i.e., the followers).
\end{itemize}




\section{Problem setting}
\label{sec:problem-statement}

To describe our hierarchical game setting, we now introduce the
different levels of strategic interactions separately.
To this end, we make use of the notation introduced in
\cite{Aussel_et_al:2023}.

\subsection{The leaders' problem}

First, we formalize the optimization problem that each leader~$i \in [N]$
aims to solve:
\begin{equation}
  \label{eq:leader-agent-1}
  \begin{aligned}
   &\underset{x^i, \bs y^{i}}{\textrm{min}} && F^i(x^i, \bs x^{-i}, \bs y^i)\\
    &~\st && G^i(x^i) \geq 0,~\bs y^i  \in\NE^{i}(x^{i}),
  \end{aligned}
\end{equation}
%
%
%
where $x^{i} \in \R^{n^i}$
%
%
is the vector of the $i$-th leader's decisions, while $\bs x^{-i} =
(x_j)_{j \in [N]\setminus\{i\}} \in \R^{n-n^i}$ collects the decisions
of the $i$-th leader's opponents and
$n\define\sum_{i\in[N]}^{}n^{i}$.
The decisions of the followers of leader $i$ are stacked in
$\bs y^i = (y^i_\nu)_{\nu \in [M_i]} \in \R^{p^i}$, with
$y^i_\nu \in\R^{p^i_\nu}$, so that $p^{i} \define \sum_{\nu \in [M_i]}
p^i_\nu$.

The cost function of leader $i\in[N]$ is $F^i :\R^{n}\times \R^{p^i}
\to \R$ while $G^{i} : \R^{n^i} \to \R^{d^i}$ defines $d_i$ private
constraints.
With an eye to \eqref{eq:Main}, we thus define $\scrX^{i} \define
\defset{x^{i}\in\R^{n^{i}}}{G^{i}(x^{i})\geq 0 }$. Furthermore,
$\NE^i(x^i)$ denotes the set of generalized Nash equilibria (GNE) of
the non-cooperative game among the $M_i$ followers attached to the
$i$-th leader, discussed later in \S \ref{subsec:followers}.

Each problem in~\eqref{eq:leader-agent-1}, which satisfies the
conditions stated next, defines an MPEC in which $\bs y^i$ is not
strictly within the control of the leader, but corresponds to an
optimistic conjecture \cite{kulkarni2015existence,fabiani2021local}.
We hence assume the following.
\begin{assumption}\label{ass:1}
  For all $i\in[N]$, it holds that:
  \begin{enumerate}
  \item[(i)] $F^{i}:\R^{n}\times\R^{p^{i}}\to\R$ is jointly continuous in both
    arguments.
  \item[(ii)] $G^i:\R^{n^{i}}\to\R^{d^{i}}$ is continuous and concave
    in each coordinate, i.e.,
    $G^{i}(x^{i})=[g^{i}_{1}(x^{i});\ldots;g^{i}_{d^{i}}(x^{i})]$ is a
    column vector and each coordinate function $g^{i}_{k}(\cdot)$ is a
    concave function.
    \hfill$\square$
  \end{enumerate}
\end{assumption}

\smallskip

Before describing the GNEP involving each leader's followers, we
introduce the solution notion employed for the considered class of
multi-leader-multi-follower games.
\begin{definition}\label{def:LFNE}
  A tuple $\{(\bar{x}^{i,\ast},\bar{\bs y}^{i,\ast})\}_{i\in[N]}$ is a
  \gls{LF-NE} if, for all $i \in [N]$, it solves the collection of
  optimistic MPECs in \eqref{eq:leader-agent-1}.
\hfill$\square$
\end{definition}

\subsection{The generalized Nash game played by the followers}
\label{subsec:followers}

The optimization problem of each leader is challenging due to the
presence of the equilibrium constraints, aggregating the decisions of
all lower-level agents indexed by $\nu \in [M_{i}]$. The followers
attached to leader $i$ are in turn involved in a strategic competition
represented by a generalized Nash game at the lower
level. Specifically, we consider a situation in which the lower-level
problem of leader $i$ hosts $M_{i}$ interdependent minimization
problems of the form:
\begin{subequations}
  \label{eq:ll-game}
  \begin{align}
    &\underset{y^i_\nu}{\textrm{min}} && f^i_\nu(x^i, y^i_\nu, \bs y^i_{-\nu})
    \\
    &~\st && D^i_\nu y^i_\nu \geq e^i_{\nu} - A^{i}_{\nu} x^i - \sum_{\mu \in [M_i]\setminus\{\nu\}}
    D^i_{\nu,\mu} y^i_\mu,
    \label{eq:ll-game:local}\\
    &&& B^i x^i + \sum_{\mu \in [M_i]}^{} E^i_\mu y^i_\mu \geq c^i,
    \label{eq:ll-game:coupling}
    \end{align}
    \end{subequations}
for $\nu=1,\ldots,M_{i}$. These minimization problems give rise
to a GNEP with the following features:
i) Both the objectives $f^{i}_{\nu}$ and the constraints depend on the
upper-level decision variable $x^{i}$ chosen by the leader; ii) The
control variables of the followers affect the cost criterion
$f^{i}_{\nu}$, as well as the constraints.
Therefore the $M_i$ followers are involved in a \gls{GNEP}, parametric
in the leader's decision variable $x^{i}$.
The restrictions of follower $i$'s decision variable
$y^{i}$ feature private constraints \eqref{eq:ll-game:local}, as well
as the shared constraints \eqref{eq:ll-game:coupling} that both depend on the
decisions of the other lower-level agents. The cost function of
follower $\nu\in[M_{i}]$ is a real-valued function
$f^{i}_{\nu}:\R^{n^{i}}\times\R^{p^{i}}\to\R$.
In addition, $D^i_{\nu,\mu} \in \R^{r^i_\nu \times p^i_\mu}$,
$A^i_{\nu} \in \R^{r^i_{\nu} \times n^i}, D^{i}_{\nu}\coloneqq
D^{i}_{\nu,\nu}$, and $e^{i}_{\nu} \in \R^{r^i_{\nu}}$ define the
private constraints of follower $\nu$, while the matrix $B^{i}\in\R^{d^{i}\times n^{i}}$
measures the impact of the upper-level decisions on the lower-level
shared constraints. Finally, $E^i_\nu \in \R^{d^{i} \times p^{i}_{\nu}}$, and
$c^{i} \in \R^{d^{i}}$ define the coupling constraints of the
followers. Let $\Gamma^{i}(x^{i})$ indicate the lower-level game associated with leader $i$, parameterized by the leader decision $x^{i}$. We denote the feasible set of the $\nu$-th
lower-level agent by $\Omega^i_\nu(x^i, \bs y^i_{-\nu})$, which
amounts to a closed, polyhedral set.
We define the $\nu$-th agent's \gls{BR} correspondence
as the solution set to the parametric problem:
\begin{equation*}
  \beta^{i}_{\nu}(x^{i}, \bs y^{i}_{-\nu})
  \coloneqq \underset{y^{i}_{\nu}\in\Omega^{i}_{\nu}(x^{i}, \bs
    y^{i}_{-\nu})}{\textrm{argmin}}~f^{i}_{\nu}(x^{i},y^{i}_{\nu},\bs
  y^{i}_{-\nu}).
\end{equation*}

\begin{definition}
  A tuple $\bar{\bs y}^{i} = (\bar{y}^{i}_{\nu})_{\nu \in
    [M_i]}\in\R^{p^{i}}$ is a GNE of the game
  $\Gamma^{i}(x^{i})$ if
  $\bar{y}^{i}_{\nu}\in\beta^{i}_{\nu}(x^{i},\bar{\bs y}^{i}_{-\nu})$
  for all $\nu\in[M_{i}]$.
  The set of GNE for $\Gamma^{i}(x^{i})$ is
  denoted by $\NE^{i}(x^{i})$.
  \hfill$\square$
\end{definition}

In particular, we are interested in the ``well-posed case'' in which
$\NE^{i}(x^{i})\neq\emptyset$ for all $x^{i}\in\scrX^{i}$, a condition
guaranteed beforehand via standard assumptions on
the data of \eqref{eq:ll-game} \cite{Facchinei_2010_GNEP}.
\begin{assumption}\label{ass:well-posed}
  For all $x^{i}\in\scrX^{i}$, $\NE^{i}(x^{i})\neq\emptyset$.
  \hfill$\square$
\end{assumption}

Moreover, later in the paper we will rely on the \gls{KKT}
characterization of the set $\NE^{i}(x^{i})$, and therefore we will
require the following assumption.
\begin{assumption}\label{ass:Slater}
  For all $\nu\in[M_{i}]$, given $x^{i}$ and $\bs y^{i}_{-\nu}$,
 $y^{i}_{\nu}\mapsto f^{i}_{\nu}(x^{i},y^{i}_{\nu},\bs y^{i}_{-\nu})$
  is continuously differentiable and convex.
  \hfill$\square$
\end{assumption}

In addition, our numerical scheme will focus on the computation of a
specific solution of the followers' game, known as
\emph{variational equilibrium}, widely employed in game theory \cite{Facchinei_2010_GNEP,facchinei2011decomposition}.
To introduce this solution concept, let
$\Omega^{i}(x^{i})=\left\{\bs y^{i} \in \R^{p_i} \mid
  y^{i}_{\nu}\in\Omega^{i}_{\nu}(x^{i},\bs y^{i}_{-\nu}) \, \forall \nu\in[M_{i}]\right\}$
be the feasible set of the followers and consider the
\emph{pseudogradient mapping}
$$
V^{i}(x^{i},\bs y^{i})\coloneqq (\nabla_{y^i_\nu} f^i_\nu(x^{i},y^{i}_\nu,\bs y^{i}_{-\nu}))_{\nu\in[M_i]}.
$$
\begin{definition}
    Given some $x^{i}\in\scrX^{i}$, inducing a GNEP
    $\Gamma^{i}(x^{i})$, an $M_{i}$-tuple $\bar{\bs y}^{i,\ast}$ is a
    variational equilibrium if $(\bs y^{i}-\bar{\bs y}^{i,\ast})^\top
    V^{i}(x^{i},\bar{\bs y}^{i,\ast})\ge0$ for all $\bs
    y^{i}\in\Omega^{i}(x^{i})$.
    \hfill$\square$
\end{definition}

\smallskip

\subsection{Special instances}

The class of lower-level \glspl{GNEP} we consider is quite general, as
illustrated by the following special instances.

\begin{example}[Hierarchical games]
If $D^{i}_{\nu,\mu}$ and $B^{i}$ as well as all the $E^{i}_{\mu}$'s
are all zero matrices and $c^{i}=0$, then each follower $\nu$ only
faces the private coupling constraints $D^{i}_{\nu}y^{i}_{\nu}\geq
e^{i}_{\nu}-A^{i}_{\nu}x^{i}$.
The lower-level problem attached to leader $i$ then becomes a standard
Nash equilibrium problem. Accordingly, the restriction $\bs
y^{i}\in\NE^i(x^{i})$ could then be reformulated as the hierarchical
game problem with the following data. Define the pseudo-gradient
$V^{i}(x^{i},\bs
y^{i})=(\nabla_{y^{i}_{\nu}}f^{i}_{\nu}(x^{i},y^{i}_{\nu},\bs
y^{i}_{-\nu}))_{\nu\in[M_{i}]}$ as well as the sets
$\mathcal{Y}^{i}_{\nu}(x^{i})\coloneqq\{y^{i}_{\nu}\in\R^{p^{i}_{\nu}}\mid
D^{i}_{\nu}y^{i}_{\nu}\geq e^{i}_{\nu}-A^{i}_{\nu}x^{i}\}$, and
$\mathcal{Y}^{i}(x^{i})\coloneqq\prod_{\nu\in[M_{i}]}\mathcal{Y}^{i}_{\nu}(x^{i})$. Then,
our leader-follower game can be compactly reformulated as the
mathematical problem with variational inequality (VI) constraints:
\begin{equation*}
\begin{aligned}
  &\underset{x^{i},\bs y^{i}}{\textrm{min}}&&F^{i}(x^{i},\bs x^{-i},\bs y^{i})\\
  &\text{ s.t.} && x^{i}\in\scrX^{i},\\
  &&&\langle{ V^{i}(x^{i},\bs y^{i}),\tilde{\bs y}^{i}-\bs
    y^{i}\rangle}\geq 0 \; \forall \tilde{\bs
    y}^{i}\in\mathcal{Y}^{i}(x^{i}).
\end{aligned}
\end{equation*}
Under Assumption \ref{ass:Slater}, identifying $\NE^{i}(x^{i})$ with
the solution to a VI is a classic concept for Nash games
\cite{Facchinei_2010_GNEP}, a class of games that is attracting
several research interest
\cite{cui2023computation,cui2023regularized}.
   \hfill$\square$
\end{example}

\begin{example}[Bilevel structure]
  If $M_{i}=1$ for all $i$, then each leader defines parameters for
  a single follower, who then solves the parametric optimization problem
  $$
  \min_{y^{i}} \quad f^{i}_{1}(x^{i},y^{i})\quad\text{s.t.} \quad
  B^{i}x^{i}+E^{i}_{1}y^{i}\geq c^{i}, \ D^{i}_{1}y^{i}\geq e^{i}.
  $$
  This is a special case when $A^{i}_{1}=0$. Under Assumption
  \ref{ass:Slater}, the first-order optimality conditions for the
  single follower lead to the VI
  $
  \langle{\nabla_{y}f^{i}_{1}(x^{i},\bar{y}^{i}),y^{i}-\bar{y}^{i}\rangle}\geq 0
  $
  for all $y^{i}\in\mathcal{Y}^{i}(x^{i})$, where
  $\mathcal{Y}^{i}(x^{i})$ represents the feasible set of the follower's
  optimization problem. The model we aim at solving is then a
  multi-agent version of a bilevel optimization problem, reading
  explicitly (dropping the superfluous subscripts) as:
  $$
  \begin{aligned}
    &\underset{x^{i},y^{i}}{\textrm{min}} && F^{i}(x^{i},\bx^{-i},y^{i})\\
    &\text{ s.t. } &&y^{i}\in\textrm{argmin}\{f^{i}_{1}(x^{i},\tilde{y}^{i})\mid \tilde{y}^{i}\in\mathcal{Y}^{i}(x^{i})\}.
  \end{aligned}
  $$
  Clearly, if $N=1$, this model corresponds to a standard bilevel
  optimization problem \cite{Dempe-et-al:2015,Dempe_Zemkoho:2020}.
  \hfill$\square$
\end{example}


\section{An equivalent reformulation}
\label{sec:reformulation}

To obtain an equivalent reformulation of the
single-leader-multi-follower game in \eqref{eq:leader-agent-1},
parametric in the $i$-th leader's opponents $\bs x^{-i}$, we use
the characterization of a GNE by starting from the
\gls{KKT} conditions associated with follower~$\nu$. In fact, under
Assumption~\ref{ass:Slater}, any point
$y^{i}_{\nu}\in\beta^{i}_{\nu}(x^{i},\bs y^{i}_{-\nu})$ satisfies the
following set of necessary optimality conditions
\begin{equation*}
  \begin{aligned}
    &\nabla_{y^i_\nu} f^i_\nu(x^{i},y^i_\nu, \bs y^i_{-\nu}) -
    (D^i_\nu)^\top \lambda^i_\nu - (E^i_\nu)^\top \delta^i_\nu
    = 0,
    \\
    &0\leq \lambda^{i}_{\nu}\perp D^i_\nu y^i_\nu - e^i + D^i x^i +
    \sum_{\mu \in [M_i]\setminus\{\nu\}} D^i_{\nu,\mu} y^i_\mu
    \geq 0,\\
    &0 \leq \delta^{i}_{\nu} \perp B^i x^i + \sum_{\mu \in [M_i]}^{}
    E^i_\mu y^i_\mu - c^{i}
    \geq 0,
  \end{aligned}
\end{equation*}
where the variables $(\lambda^{i}_{\nu},\delta^{i}_{\nu}) \in
\R^{r^{i}_{\nu}}\times\R^{p^{i}_{\nu}}$ are the Lagrange
multipliers of agent~$\nu$. Then, in view of
\cite[Thm.~4.6]{Facchinei_2010_GNEP}, when the tuple
$(\bar{y}^{i}_{\nu},\bar{\lambda}^{i}_{\nu},\bar{\delta}^{i}_{\nu})$
satisfies the \gls{KKT} conditions for every $\nu\in[M_{i}]$, we have
that $\bar{\bs y}^{i}\in\NE^{i}(x^{i})$. Specializing this
characterization to variational equilibria, we only have to replace
the agent-specific Lagrange multipliers $\delta^{i}_{\nu}$ with a
single copy~$\delta^{i}$ of these multipliers, which solely
affects the last complementarity condition in the system above.

By replacing the \gls{GNEP} in the lower-level with the concatenation
of all \gls{KKT} conditions characterizing the followers' equilibrium,
the single-level reformulation of the $i$-th leader's
problem reads as follows:
\begin{subequations}
  \label{eq:single-level-reform}
  \begin{align}
    & \underset{x^i, \bs y^i, \lambda^i, \delta^i}{\textrm{min}} \quad
     F^i(x^i, \bs x^{-i}, \bs y^i)
    \label{eq:single-level-reform:objective} \\
    &~~~~\st \quad
     G^i(x^i)\geq 0,
    \label{eq:single-level-reform:ul-constr}\\
    &~~~~~~~~~~\nabla_{y^i_\nu} f^i_\nu(y^i_\nu, x^i, \bs y^i_{-\nu}) -
      (D^i_\nu)^\top \lambda^i_\nu - (E^i_\nu)^\top \delta^i = 0,
      \notag\\
    &\hspace{5cm}\text{ for all }\nu \in [M_i],
    \label{eq:single-level-reform:ll-constr-1}\\
    &~~~~~~~~~~D^i_\nu y^i_\nu - e^i_{\nu} + A^i_{\nu} x^i + \sum_{\mu \in
      [M_i]\setminus\{\nu\}} D^i_{\nu,\mu}
    y^i_\mu \geq 0,\notag\\
    &\hspace{5cm}\text{ for all }\nu \in [M_i], \label{eq:single-level-reform:ll-constr-2}\\
    &~~~~~~~~~~B^i x^i + \sum_{\mu \in [M_i]} E^i_\mu y^i_\mu - c^i \geq 0,
    \label{eq:single-level-reform:ll-constr-3}\\
    &~~~~~~~~~~\delta^i \geq 0, \lambda^i_\nu \geq 0, \text{ for all }\nu \in [M_i],
    \label{eq:single-level-reform:ll-constr-4}\\
    &~~~~~~~~~~ \lambda^i_\nu \perp \left(D^i_\nu y^i_\nu - e^i + A^i_{\nu} x^{i} +
      \sum_{\mu \in [M_i]\setminus\{\nu\}} D^{i}_{\nu,\mu} y^i_\mu\right)\notag\\
    &\hspace{5cm}\text{ for all }\nu \in [M_i],
    \label{eq:single-level-reform:compl-1}\\
    &~~~~~~~~~~\delta^i \perp \left( B^i x^i - \sum_{\mu \in [M_i]} E^i_\mu
      y^i_\mu - c^i \right).\label{eq:single-level-reform:compl-2}
  \end{align}
\end{subequations}

As a result, solving the reformulation in
\eqref{eq:single-level-reform}, parametric in $\bs x^{-i}$, leads to
an optimistic solution of the single-leader-multi-follower problem,
according to Definition~\ref{def:LFNE}.

\begin{theorem}
  \label{thm:correctness-reform}
  Let $(x^{i,*}, \bs y^{i,*})$ be a global optimal solution
  of the $i$-th leader's problem.
  Then, there exists $\lambda^{i,*}$ and
  $\delta^{i,*}$ so that $(x^{i,*}, \bs y^{i,*}, \lambda^{i,*},
  \delta^{i,*})$ is a global optimal solution of the single-level
  reformulation~\eqref{eq:single-level-reform}. Conversely, if
  $(x^{i,*}, \bs y^{i,*}, \lambda^{i,*},
  \delta^{i,*})$ solves the single-level
  problem~\eqref{eq:single-level-reform} globally, then $(x^{i,*},
  \bs y^{i,*})$ is a global optimal solution of the
  single-leader-multi-follower game~\eqref{eq:leader-agent-1}.
\end{theorem}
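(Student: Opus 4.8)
The plan is to reduce the statement to an equality of feasible sets. Since the objective \eqref{eq:single-level-reform:objective} is $F^i(x^i,\bs x^{-i},\bs y^i)$ and does not involve the auxiliary multipliers $(\lambda^i,\delta^i)$, the theorem follows at once once I establish that, writing $\mathcal{F}$ for the feasible set of \eqref{eq:single-level-reform},
\[
\Defset{(x^i,\bs y^i)}{x^i\in\scrX^i,\ \bs y^i\in\NE^i(x^i)}=\proj_{(x^i,\bs y^i)}\mathcal{F},
\]
where $\NE^i(x^i)$ is read throughout as the set of variational equilibria of $\Gamma^i(x^i)$. Granting this equality, a global minimizer $(x^{i,*},\bs y^{i,*})$ of \eqref{eq:leader-agent-1} lifts to some $(x^{i,*},\bs y^{i,*},\lambda^{i,*},\delta^{i,*})\in\mathcal{F}$ with the same objective value; this point must be optimal for \eqref{eq:single-level-reform}, for otherwise a strictly better feasible point of \eqref{eq:single-level-reform} would project to a strictly better feasible point of \eqref{eq:leader-agent-1}. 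The converse direction is symmetric, and in both cases the two problems share the same optimal value. So the whole argument hinges on the two set inclusions.

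For the inclusion ``$\subseteq$'', I would fix $(x^i,\bs y^i)$ with $G^i(x^i)\ge 0$ and $\bs y^i$ a variational equilibrium, i.e.\ a solution of the variational inequality defined by $V^i(x^i,\cdot)$ over the polyhedron $\Omega^i(x^i)$. Since $\Omega^i(x^i)$ is cut out by the finitely many affine inequalities \eqref{eq:ll-game:local}--\eqref{eq:ll-game:coupling}, a constraint qualification holds at every feasible point, so the KKT system of this variational inequality is necessary: there exist multipliers $\lambda^i_\nu\ge 0$ for the private blocks and a single $\delta^i\ge 0$ for the shared block \eqref{eq:ll-game:coupling} satisfying stationarity and complementarity. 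The key step is then to read the stationarity condition block-by-block over $\nu\in[M_i]$, which reproduces precisely \eqref{eq:single-level-reform:ll-constr-1}--\eqref{eq:single-level-reform:compl-2}, so that $(x^i,\bs y^i,\lambda^i,\delta^i)\in\mathcal{F}$.

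For the inclusion ``$\supseteq$'', I would take any $(x^i,\bs y^i,\lambda^i,\delta^i)\in\mathcal{F}$. Then $G^i(x^i)\ge 0$ gives $x^i\in\scrX^i$. Under Assumption~\ref{ass:Slater} each map $y^i_\nu\mapsto f^i_\nu(x^i,y^i_\nu,\bs y^i_{-\nu})$ is differentiable and convex over the polyhedral set $\Omega^i_\nu(x^i,\bs y^i_{-\nu})$, so the per-follower KKT conditions embedded in \eqref{eq:single-level-reform:ll-constr-1}--\eqref{eq:single-level-reform:compl-2} are also \emph{sufficient}; hence $y^i_\nu\in\beta^i_\nu(x^i,\bs y^i_{-\nu})$ for every $\nu$. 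By the characterization recalled before the statement (cf.\ \cite[Thm.~4.6]{Facchinei_2010_GNEP}), and because all followers share the same multiplier $\delta^i$ for the coupling constraint, $\bs y^i$ is in fact a variational equilibrium of $\Gamma^i(x^i)$, i.e.\ $\bs y^i\in\NE^i(x^i)$. This would complete the set equality, and with it the proof.

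The only steps I expect to require care are the two appeals to a follower-level constraint qualification — needed so that KKT is necessary in the ``$\subseteq$'' direction, and here available for free because all lower-level constraints are affine — and the multiplier bookkeeping that separates variational from generalized equilibria, namely that the coupling-constraint multiplier $\delta^i$ must be common across all followers (and that, conversely, a common $\delta^i$ in \eqref{eq:single-level-reform} certifies a variational rather than merely generalized equilibrium). Both points are already settled by the discussion in \S\ref{sec:reformulation}, so beyond them the argument is just a careful matching of the two optimality systems and carries no additional analytic difficulty.
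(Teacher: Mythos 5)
The paper does not actually prove Theorem~\ref{thm:correctness-reform}: its entire ``proof'' is a pointer to \cite[Thm.~3.3.8]{Aussel:2020aa}. Your argument is therefore a self-contained reconstruction rather than a variant of anything in the paper, and its skeleton --- the objective ignores $(\lambda^i,\delta^i)$, so everything reduces to showing that the $(x^i,\bs y^i)$-projection of the feasible set of \eqref{eq:single-level-reform} equals $\Defset{(x^i,\bs y^i)}{x^i\in\scrX^i,\ \bs y^i\in\NE^i(x^i)}$, with KKT necessity (affine lower-level constraints, so no constraint-qualification issue) giving one inclusion and KKT sufficiency (convexity from Assumption~\ref{ass:Slater}) giving the other --- is exactly the standard argument that the cited theorem formalizes. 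One caveat you flag but should not treat as mere convention: the equivalence holds only if $\NE^i(x^i)$ is read as the set of \emph{variational} equilibria. Under the paper's own Definition~2 ($\NE^i(x^i)$ is the full GNE set) the forward direction can fail, because a non-variational GNE admits no feasible lift into \eqref{eq:single-level-reform}; the single shared $\delta^i$ is precisely what cuts the GNE set down to the variational ones.

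The one step in your write-up that does not go through as written is in the ``$\subseteq$'' direction, where you propose to obtain \eqref{eq:single-level-reform:ll-constr-1}--\eqref{eq:single-level-reform:compl-2} by taking the KKT system of the single variational inequality over $\Omega^i(x^i)$ and reading its stationarity condition ``block-by-block''. The joint VI assigns one multiplier to every row of \emph{all} constraints defining $\Omega^i(x^i)$, so its stationarity block for $y^i_\nu$ contains, besides $(D^i_\nu)^\top\lambda^i_\nu$ and $(E^i_\nu)^\top\delta^i$, the cross terms $(D^i_{\mu,\nu})^\top\lambda^i_\mu$ arising from the other followers' constraints \eqref{eq:ll-game:local}, which depend on $y^i_\nu$ through $D^i_{\mu,\nu}y^i_\nu$ whenever $D^i_{\mu,\nu}\neq 0$. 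Those terms are absent from \eqref{eq:single-level-reform:ll-constr-1}, so the two systems coincide only when the ``private'' constraints are genuinely decoupled across followers. The correct route --- the one the paper sketches just before the theorem --- is to pass from the variational equilibrium to the concatenated \emph{per-follower} KKT systems via \cite[Thm.~4.6]{Facchinei_2010_GNEP}, equalizing only the multipliers of the truly shared constraint \eqref{eq:ll-game:coupling} while each follower keeps its own $\lambda^i_\nu$. With that substitution (and the variational reading of $\NE^i$), your argument is complete and matches what the cited reference establishes.
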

\begin{proof}
  See \cite[Thm.~3.3.8]{Aussel:2020aa}.
  \end{proof}
\begin{remark}
  Consider the following generalizations:
  \begin{enumerate}
  \item \emph{Extension to nonlinear constraints}: Under appropriate
    constraint qualifications, our framework can include
    generic convex feasible sets.
    From \cite{Aussel-et-al:2019,Dempe-Dutta:2012}, however, one
    has to analyze the respective Lagrange multipliers to
    establish the analogue of
    Theorem~\ref{thm:correctness-reform}.
  \item \emph{Multilinear terms for constraints and costs}: The constraints
    in \eqref{eq:ll-game} keep their linear nature even if one
    allows for multilinear terms.
    The associated single-level structure
    \eqref{eq:single-level-reform}, however, would inherit such
    nonlinearities and the resulting nonconvex terms.
    Analogous considerations apply to the lower-level cost functions,
    where one looses one degree of the multilinear
    polynomial by taking the gradient in the \gls{KKT}
    conditions.
    \hfill$\square$
  \end{enumerate}
\end{remark}

Next, we replace the complementarity constraints in
\eqref{eq:single-level-reform:ll-constr-2}--%
\eqref{eq:single-level-reform:compl-2}
using binary variables and a standard big-$M$ reformulation.
This yields the following \gls{MI} program:
\begin{subequations}
  \label{eq:single-level-reform-w-bins}
  \begin{alignat}{3}
    &\underset{x^i, \bs y^i, \lambda^i, \delta^i, s^i, t^i}{\textrm{min}} && F^i(x^i, \bs x^{-i}, \bs y^i)
    \label{eq:single-level-reform:objective-copy} \\
    & \hspace{.8cm}\st && G^i(x^i)\geq 0,
    \label{eq:single-level-reform:ul-constr-copy}\\
    &&& \nabla_{y^i_\nu} f^i_\nu(y^i_\nu, x^i, \bs y^i_{-\nu}) -
    (D^i_\nu)^\top \lambda^i_\nu - (E^i_\nu)^\top \delta^i_\nu = 0, \notag\\
    &&& \hspace{3cm} \text{ for all }\nu \in [M_i],
    \label{eq:single-level-reform:ll-constr-1a}\\
    &&& D^i_\nu y^i_\nu - e^i + A^i_{\nu} x^i + \sum_{\mu \in [M_i] \setminus \{\nu\};} D^i_{\nu,\mu}
    y^i_\mu \geq 0 , \notag\\
    &&& \hspace{3cm} \text{ for all }\nu \in [M_i],
    \label{eq:single-level-reform:ll-constr-2a}\\
    &&& D^i_\nu y^i_\nu - e^i + A^i_{\nu} x^i + \sum_{\mu \in [M_i] \setminus \{\nu\}} D^i_{\nu,\mu}
    y^i_\mu \notag\\
    &&& \hspace{.9cm} \leq (\bsone - s^i_{\nu}) \mathcal{M}^i_\nu \text{ for all }\nu \in [M_i],
    \label{eq:single-level-reform:ll-constr-2b}\\
    &&& B^i x^i + \sum_{\mu \in [M_i]} E^i_\mu y^i_\mu - c^i \geq 0,
    \label{eq:single-level-reform:ll-constr-3a}\\
    &&& B^i x^i + \sum_{\mu \in [M_i]} E^i_\mu y^i_\mu - c^i \leq
    (\bsone - t^i_\nu) \mathcal{N}^i_\nu,
    \label{eq:single-level-reform:ll-constr-3b}\\
    &&& \lambda^i_\nu, \delta^i_\nu \geq 0,~ \nu \in [M_i],
    \label{eq:single-level-reform:ll-constr-4a}\\
    &&& \lambda^i_\nu \leq s^i_\nu \mathcal{M}^i_\nu,~\delta^i_\nu \leq t^i_\nu \mathcal{N}^i_\nu,~\nu \in [M_i],
    \label{eq:single-level-reform:ll-constr-4b}\\
    &&& s^i_\nu \in \set{0,1},~\nu \in [M_i],
    \label{eq:single-level-reform:compl-1-binary}\\
    &&& t^i_\nu \in \set{0,1},~\nu \in [M_i],
    \label{eq:single-level-reform:compl-2-binary}
  \end{alignat}
\end{subequations}
where $\mathcal{M}^i_\nu$ and $\mathcal{N}^i_\nu$ are vectors with
sufficiently large entries.

With \eqref{eq:single-level-reform-w-bins} at hand, we can
rewrite~\eqref{eq:leader-agent-1} as a Nash game
with MI variables $z^{i}=(x^i,
\bs y^i, \lambda^i, \delta^i, s^i, t^i) \in
\R^{n^{i}}\times\R^{p^{i}}\times\R^{m^{i}}\times\R^{p^{i}}\times\R^{r^{i}}\times
\R^{M_{i}d^{i}}\times \set{0,1}^{M_{i}}\times \set{0,1}^{M_{i}}$,
satisfying \eqref{eq:single-level-reform:ul-constr-copy}--%
\eqref{eq:single-level-reform:compl-2-binary}. We emphasize that the
vector $z^{i}$ contains the leader's decision, but also all control
and dual variables of the attached followers. Hence, the leader solves
the problem of the followers by identifying a KKT point in this
optimistic formulation of the problem.
After a suitable manipulation of the variables and
constraints involved, problem~\eqref{eq:single-level-reform-w-bins} reads
\begin{equation}
  \label{eq:agent-i-final-reform}
  \forall i \in [N]: \quad
  \min_{z^i} \quad J^i(z^i, \bs z^{-i})
  \quad \st \quad z^i \in \mathcal{Z}^i,
\end{equation}
where
$$
  \scrZ^{i}
  \define
  \Defset{z^i = (x^i, \bs y^i, \lambda^i, \delta^i, s^i, t^i)}{z^i
    \text{ satisfies }
    \eqref{eq:single-level-reform:ul-constr-copy}\text{--}%
    \eqref{eq:single-level-reform:compl-2-binary}}.
$$
The proposed reformulation allows us to embed the solution of the
multi-leader-multi-follower problem as a MI-NE between the leaders~$i
\in [N]$ that solve the collection of problems in
\eqref{eq:agent-i-final-reform}. We therefore obtain a Nash game
reformulation of the original multi-leader-multi-follower game,
featuring MI restrictions at the leader level.

Next, we will show that for a relevant class of games, we can ensure
existence of a solution to \eqref{eq:agent-i-final-reform}, and
develop a numerical scheme that solves our original
multi-leader-multi-follower by relying on the Nash game
reformulation.


\section{A numerically tractable class of mixed-integer games}

Despite multi-leader-multi-follower games being a natural model for
many engineering applications, existence results for the equilibria
are restricted to special cases \cite{Pang:2005aa}.
Instead of imposing uniqueness of the lower-level
equilibrium, we exploit the \gls{KKT} conditions derived in the
previous section together with a special assumption on the agents'
cost structure. Following \cite{kulkarni2015existence}, we will
consider \emph{separable} hierarchical games with \emph{potential
  structure} over the leaders' interactions.
\begin{assumption}\label{ass:sep_pot}
  The following conditions hold true:
  \begin{enumerate}
  \item[(i)] For all $i\in[N]$, $F^{i}$ is separable, i.e., $F^{i}(x^{i},\bs x^{-i},\bs y^{i})
    = g^{i}(x^{i}, \bs x^{-i})+h^{i}(x^{i}, \bs y^{i})$;
  \item[(ii)] There exists $W:\R^{n}\to\R$ satisfying
    $g^{i}(\tilde{x}^{i}, \bs x^{-i})-g^{i}(x^{i}, \bs
    x^{-i})=W(\tilde{x}^{i}, \bs x^{-i})-W(x^{i}, \bs x^{-i})$ for all
    $i$.\hfill$\square$
  \end{enumerate}
\end{assumption}

\smallskip

Consider then the function $\pi:\R^{n}\times\R^{p}\to \R$, defined as
\begin{equation*}
\pi(\bs x, \bs y)=W( \bs x)+\sum_{i \in [N]}^{}h^{i}(x^{i}, \bs y^{i}),
\end{equation*}
as well as its lifted version to the space $\scrZ$ as
$P(\bs z)=\pi(\bs x,\bs y)$, $\bs z \in \scrZ$.
The relation in Assumption~\ref{ass:sep_pot}\,(ii) allows us to verify
that $P(\cdot)$ actually amounts to an exact potential for the
multi-leader-multi-follower game $\{(J^{i},\scrZ^{i})_{i\in[N]}\}$:
\begin{align*}
  & P(\tilde{z}^{i}, \bs z^{-i})-P(z^{i}, \bs z^{-i})
  \\
  & = W(\tilde{x}^{i}, \bs x^{-i})-W(x^{i}, \bs x^{-i})+h^{i}(\tilde{x}^{i},\tilde{ \bs y}^{i})-h^{i}(x^{i}, \bs y^{i})
  \\
  & = g^{i}(\tilde{x}^{i},\bs x^{-i})-g^{i}(x^{i}, \bs x^{-i})+h^{i}(\tilde{x}^{i},\tilde{ \bs y}^{i})-h^{i}(x^{i}, \bs y^{i})
  \\
  & = F^{i}(\tilde{x}^{i},\tilde{\bs x}^{-i},\tilde{\bs y}^{i})-F^{i}(x^{i}, \bs x^{-i}, \bs y^{i})
  \\
  & = J^{i}(\tilde{z}^{i}, \bs z^{-i})-J^{i}(z^{i}, \bs z^{-i}).
\end{align*}

We can therefore approach the \gls{LF-NE} computation as the solution to the \gls{MI} nonlinear optimization problem:
\begin{equation}\label{eq:Master}
  \underset{\bs z\in\scrZ}{\textrm{min}}~P(\bs z).
\end{equation}
\begin{assumption}\label{ass:compactlevels}
    There exists $\bs{z}(0)\in\scrZ$ for which the level set $\mathcal{L}_{P}(\bs{z}(0)):=\{\bs{z} \mid P(\bs{z})\leq P(\bs{z}(0))\}$ is compact.
    \hfill$\square$
\end{assumption}

In view of Assumptions~\ref{ass:well-posed}--\ref{ass:compactlevels}
and Theorem~\ref{thm:correctness-reform}, the master problem
\eqref{eq:Master} has a global solution and it induces a
LF-NE in the spirit of Definition~\ref{def:LFNE}. In addition, note
that Assumption~\ref{ass:compactlevels} is not overly
restrictive. Each block of $\bs{z}$ is a tuple of $z^{i}$'s consisting
of decision variables $(x^{i},\bs{y}^{i})$, Lagrange multipliers
$(\lambda^{i},\delta^{i})$ and binary variables
$(\delta^{i},s^{i},t^{i})$. Since the Lagrange multipliers are
confined to a compact domain in view of our big-$M$ reformulation,
Assumption \ref{ass:compactlevels} can hence be guaranteed by assuming
that $\pi$ has compact level sets, which in turn yields the coercivity
of $P(\cdot)$ as in \cite{kulkarni2015existence,fabiani2021local}.

\begin{algorithm}[t]
  \caption{Proximal-like LF-NE computation}\label{alg:exact_ordinal}
  \DontPrintSemicolon
  \SetArgSty{}
  \SetKwFor{ForAll}{For all}{do}{End forall}\SetKwFor{While}{While}{do}{End while}
  Set $k = 0$, choose
  $\bs{z}(k)\in \mc Z$, $\tau(k) > 0$ and $\omega \in (0,1)$\\
  \While{$\bs{z}(k)$ is not satisfying a stopping criterion}{
    \ForAll {$i \in [N]$}{
      Obtain $\hat{\bs{z}}^{i}(k)$ and
      set  $z^i(k+1) \in \beta^i_{\tau(k)}(\hat{\bs{z}}^i(k))$\\
      Update $\hat{\bs{z}}^{i+1}(k) = (z^i(k+1), \hat{\bs{z}}^{-i}(k))$
    }
    Set $\bs{z}(k+1)= (z^1(k+1),\ldots,z^N(k+1))$\\
    Update:
    $$         \hspace{-0.15cm}\tau(k +1)\!=\!\textrm{max}\left\{\omega\tau(k),\textrm{min}\{\tau(k),d_{\rho}(\bs{z}(k \!+\!1),\bs{z}(k))\}\right\}$$
    Set $k = k+1$
  }
\end{algorithm}

To compute an equilibrium, we adapt the proximal-type Gauss--Seidel
method defined in \cite{Fabiani_et_al:2022} and reported in
Algorithm~\ref{alg:exact_ordinal}. There, we let $k\geq 0$ denote the
iteration index and $\bs{z}(k)$ the iterate at the beginning of round
$k+1$. For leader $i\in[N]$, the (local) population state reads
\begin{align*}
  &\hat{\bs{z}}^i(k) \\
  &= (z^1(k+1), \ldots,
    z^{i-1}(k+1), z^i(k), z^{i+1}(k),\ldots, z^N(k)).
\end{align*}
This corresponds to the collective vector of strategies in the $k$-th
iteration communicated to the agent $i$ when this agent has to perform
an update. Throughout this process, note that
$\hat{\bs{z}}^{1}(k)=\bs{z}(k)$ and
$\hat{\bs{z}}^{N+1}(k)=\bs{z}(k+1)$.

A key element is the inclusion of an \gls{ICRF} $\rho^i$
\cite[Def.~3]{Fabiani_et_al:2022} that gives (for some $\tau \ge 0$)
rise to the augmented cost function
\begin{equation*}
  \tilde{J}^{i}_{\tau}(z^{i},w^{i};\bs z^{-i})\define
  J^{i}(z^{i}, \bs z^{-i})+\tau \rho^{i}(z^{i}-w^{i})\quad\forall i\in[N],
\end{equation*}
and the corresponding proximal-like \gls{BR} mapping:
\begin{equation*}
  \beta^i_{\tau}(\bs{z}) \define \underset{\zeta^i \in
    \mc{Z}^i}{\textrm{argmin}} \
  \tilde{J}^{i}_{\tau}(\zeta^{i},z^{i};\bs z^{-i}) .
\end{equation*}
Thus, each leader $i \in [N]$ computes the new strategy as a point in the \gls{MI} proximal \gls{BR} mapping,
$
z^i(k+1)\in \beta^i_{\tau(k)}(\hat{\bs{z}}_{i}(k)).
$
Successively, the next internal state $\hat{\bs{z}}^{i+1}(k)$ is updated and passed to the ($i+1$)-th
agent.


The proposed algorithm leverages the adaptive update of the regularization parameter $\tau$, originating a monotonically decreasing sequence $\{\tau(k)\}_{k\geq 0}$, i.e., $\tau(k+1)\leq \tau(k)$ for all $k \geq 0$ \cite{facchinei2011decomposition}.
The rate of decrease of the overall scheme, instead, depends on the "cost-to-move" function $d_{\rho}(\bs{z}(k+1),\bs{z}(k))=\textrm{max}_{i\in[N]} \ \rho^{i}(z^i(k+1)-z^i(k)) \geq 0$, 
which measures the progress the method is making in the agents' proximal steps at the $k$-th iteration, and shall decrease over time to guarantee convergence towards an MI-NE.

\begin{theorem}[\hspace{-.01cm}\textup{\cite[Th.~1]{Fabiani_et_al:2022}}]\label{th:exact_generalized}
Let Assumptions \ref{ass:1}--\ref{ass:compactlevels} hold true. Then, any accumulation point of the sequence $\{\bs{z}(k)\}_{k\geq 0}$ generated by Algorithm~\ref{alg:exact_ordinal} is an MI-NE of the game in \eqref{eq:agent-i-final-reform}, coinciding with an LF-NE of \eqref{eq:Main}.
	\hfill$\square$
\end{theorem}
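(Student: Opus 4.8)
The plan is to run the standard block-coordinate descent argument for exact potential games, tracking along the iterates of Algorithm~\ref{alg:exact_ordinal} the potential $P$ constructed above, and then to exploit the adaptive regularization schedule together with the mixed-integer structure of the $\rho^i$'s to characterize the limit points. \emph{Step 1 (potential descent along a sweep).} Fix $k$ and $i\in[N]$. Since $z^i(k+1)\in\beta^i_{\tau(k)}(\hat{\bs z}^i(k))$ minimizes $\zeta^i\mapsto J^i(\zeta^i,\hat{\bs z}^{-i}(k))+\tau(k)\rho^i(\zeta^i-z^i(k))$ over $\mathcal Z^i$, testing this minimum against the feasible point $\zeta^i=z^i(k)$ and using $\rho^i(0)=0$ yields $J^i(z^i(k+1),\hat{\bs z}^{-i}(k))+\tau(k)\rho^i(z^i(k+1)-z^i(k))\le J^i(z^i(k),\hat{\bs z}^{-i}(k))$. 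Because $\hat{\bs z}^{i+1}(k)$ and $\hat{\bs z}^i(k)$ differ only in block $i$ and $P$ is an exact potential for $\{(J^i,\mathcal Z^i)\}_{i\in[N]}$, the left-hand increment equals $P(\hat{\bs z}^{i+1}(k))-P(\hat{\bs z}^i(k))$; summing over $i$ and telescoping with $\hat{\bs z}^1(k)=\bs z(k)$ and $\hat{\bs z}^{N+1}(k)=\bs z(k+1)$ gives $P(\bs z(k+1))\le P(\bs z(k))-\tau(k)\,d_\rho(\bs z(k+1),\bs z(k))$.

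\emph{Step 2 (consequences of the descent and of the adaptive rule).} From Step 1 the whole orbit stays in $\mathcal L_P(\bs z(0))$, which is compact by Assumption~\ref{ass:compactlevels}; hence accumulation points exist and $P(\bs z(k))\downarrow P^\star>-\infty$, so $\tau(k)\,d_\rho(\bs z(k+1),\bs z(k))\le P(\bs z(k))-P(\bs z(k+1))\to 0$. The update $\tau(k+1)=\max\{\omega\tau(k),\min\{\tau(k),d_\rho(\bs z(k+1),\bs z(k))\}\}$ keeps $\{\tau(k)\}$ positive and non-increasing, hence convergent to some $\tau^\star\ge0$; were $\tau^\star>0$, the previous limit would force $d_\rho(\bs z(k+1),\bs z(k))\to0$, the update would collapse to $\tau(k+1)=\omega\tau(k)$ for all large $k$, and $\tau(k)\to0$ would follow --- a contradiction; therefore $\tau(k)\downarrow0$. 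A more careful case analysis separating the iterations at which $\tau$ contracts from the stretches on which it freezes, using again $\tau(k)\,d_\rho(\bs z(k+1),\bs z(k))\to0$, upgrades this to $d_\rho(\bs z(k+1),\bs z(k))\to0$. By the defining properties of the \gls{ICRF}s $\rho^i$ --- vanishing exactly at the origin and bounded away from $0$ as soon as the integer part of the argument is nonzero --- this means that, for every $i$, the integer components of $z^i(k)$ are eventually constant while the continuous increments vanish; the same then holds for the partially updated vectors $\hat{\bs z}^i(k)$.

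\emph{Step 3 (limit passage and back-translation).} Let $\bar{\bs z}$ be any accumulation point, $\bs z(k_j)\to\bar{\bs z}$. By Step 2, $\hat{\bs z}^i(k_j)\to\bar{\bs z}$ and $z^i(k_j+1)\to\bar z^i$ for every $i$. Passing to the limit in the optimality inequality $J^i(z^i(k_j+1),\hat{\bs z}^{-i}(k_j))+\tau(k_j)\rho^i(z^i(k_j+1)-z^i(k_j))\le J^i(\zeta^i,\hat{\bs z}^{-i}(k_j))+\tau(k_j)\rho^i(\zeta^i-z^i(k_j))$ --- using continuity of $J^i=F^i$ (Assumption~\ref{ass:1}(i)), $\tau(k_j)\downarrow0$, and boundedness of $\rho^i(\zeta^i-z^i(k_j))$ on the compact level set --- leaves $J^i(\bar z^i,\bar{\bs z}^{-i})\le J^i(\zeta^i,\bar{\bs z}^{-i})$ for every $\zeta^i\in\mathcal Z^i$; since $\mathcal Z^i$ is closed, $\bar z^i$ is a global best response, i.e., $\bar{\bs z}$ is an MI-NE of \eqref{eq:agent-i-final-reform}. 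Finally, at such an MI-NE each $\bar z^i$ globally solves the big-$M$ program \eqref{eq:single-level-reform-w-bins} with $\bs x^{-i}$ fixed, so the exactness of the big-$M$ reformulation together with Theorem~\ref{thm:correctness-reform} shows that $(\bar x^i,\bar{\bs y}^i)$ globally solves the $i$-th optimistic MPEC \eqref{eq:leader-agent-1} for every $i$, i.e., the profile is an \gls{LF-NE} of \eqref{eq:Main} in the sense of Definition~\ref{def:LFNE}.

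I expect the heart of the argument --- and its main obstacle --- to be the second half of Step 2: converting the vanishing \emph{weighted} progress $\tau(k)\,d_\rho(\bs z(k+1),\bs z(k))\to0$ into the pointwise statement $d_\rho(\bs z(k+1),\bs z(k))\to0$ (with $\tau(k)\downarrow0$) requires a delicate bookkeeping of the adaptive schedule, and it is exactly this step that both rules out a positive accumulation value of $\{\tau(k)\}$ and, through the ICRF property, legitimizes the claim that the discrete variables stabilize --- without which the limit passage in Step 3 would not be available.
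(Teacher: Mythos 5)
Your overall architecture is exactly the one the paper relies on: the paper does not prove this theorem itself but imports it from \cite[Th.~1]{Fabiani_et_al:2022}, and the remark following the statement supplies precisely the skeleton you reproduce (monotone decrease of $P$ via the exact-potential identity, confinement of the orbit to the compact level set $\mathcal{L}_P(\bs z(0))$, Bolzano--Weierstrass). Your Step~1 is correct: testing the proximal best response against $\zeta^i=z^i(k)$, using $\rho^i(0)=0$, translating block increments of $J^i$ into increments of $P$, and telescoping gives $P(\bs z(k+1))\le P(\bs z(k))-\tau(k)\sum_i\rho^i(z^i(k+1)-z^i(k))\le P(\bs z(k))-\tau(k)\,d_\rho(\bs z(k+1),\bs z(k))$. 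Step~3 is also the right limit passage, \emph{provided} the input it needs from Step~2 is available.

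The genuine gap is the one you yourself flag and then wave away: the claim that $\tau(k)\,d_\rho(\bs z(k+1),\bs z(k))\to 0$ together with the adaptive update can be ``upgraded'' to $d_\rho(\bs z(k+1),\bs z(k))\to 0$ for the \emph{whole} sequence. From the two facts you actually have on the table this does not follow. Once $\tau(k)\downarrow 0$ (your dichotomy ruling out $\tau^\star>0$ is fine), nothing prevents a scenario in which $d_\rho(\bs z(k+1),\bs z(k))\ge\varepsilon$ on a subsequence: at any such iteration $d_\rho\ge\tau(k)$ eventually holds, the update returns $\tau(k+1)=\tau(k)$ (the parameter simply freezes), the summability $\sum_k\tau(k)\,d_\rho(\bs z(k+1),\bs z(k))<\infty$ is respected because $\tau(k)$ is already geometrically small, and no contradiction arises. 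What the update rule \emph{does} give cheaply is only subsequential vanishing: at every iteration where $\tau$ strictly decreases one must have $d_\rho(\bs z(k+1),\bs z(k))<\tau(k)\to 0$, and there are infinitely many such iterations. Bridging from this subsequential statement to ``\emph{any} accumulation point is an MI-NE'' --- in particular, guaranteeing that along the specific subsequence $\bs z(k_j)\to\bar{\bs z}$ one also has $\hat{\bs z}^i(k_j)\to\bar{\bs z}$ and $z^i(k_j+1)\to\bar z^i$ --- is exactly the technical content of the cited proof (it exploits the ICRF properties and the finiteness of integer configurations inside the compact level set), and it is missing here. Without it, your Step~3 limit passage is only justified for \emph{some} accumulation point, not for all of them. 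The final back-translation to an LF-NE via Theorem~\ref{thm:correctness-reform} and the exactness of the big-$M$ step is fine.
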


\begin{remark}
An accumulation point for
$\{\bs{z}(k)\}_{k\geq 0}$ generated by Algorithm \ref{alg:exact_ordinal} exists in view of
Assumption~\ref{ass:compactlevels}.
The analysis performed in \cite{Fabiani_et_al:2022} shows that the sequence $\bs{z}(k)$ monotonically decreases the
potential function values, i.e., $P(\bs{z}(k+1))\leq P(\bs{z}(k))$
holds for all $k\in\N$. Hence, the trajectory $\{\bs{z}(k)\}_{k\in\N}$
belongs to the level set $\mathcal{L}_{P}(\bs{z}(0))$, which we assume
to be a compact set. Existence of converging subsequences is finally
guaranteed by standard arguments involving the Bolzano-Weierstrass theorem.\hfill{$\square$}
\end{remark}

\section{Competition among ride-hailing platforms}
\label{sec:case-studies}




Taking inspiration from
\cite{fabiani2022stochastic,fabiani2023personalized}, we now
investigate how on-demand competing ride-hailing platforms (taking the
role of the leaders) design their pricing strategies under a regulated
pricing scenario. At the same time, a set of subscribed drivers (i.e.,
followers of each platform) decide whether to take a ride or not,
according to the wages they receive and the distance from the pick-up
point.

\subsection{Model description}

\subsubsection{Platforms' game}

Given an area of interest~\mbox{$h\in[H]\subset \N$} (e.g., an airport,
downtown, shopping malls, and so on) we denote with $C^h>0$ the mass
of potential riders in that area. Then, each firm $i \in [N]$ aims at
maximizing its profit by setting a price $p^{i,h} \geq 0$ for the
on-demand ride-hailing service to attract as many customers as
possible in area~$h$. The prices should satisfy a bound of the
form $p^{i,h}\leq\bar p^h$, usually imposed by consumer associations.
The firms must also set a wage $w^{i,h} \in [\underline{w}, \,
\bar{w}^{i,h}]$, with $\underline{w},\bar{w}^{i,h} > 0$ for the
registered drivers on the $i$-th ride-hailing platform. The lower
bound $\underline{w}$ is typically regulated by institutions
\cite{beer2017qualitative,zhong2022demand} and therefore can be taken
uniform across areas.

Similar to \cite{fabiani2022stochastic}, we assume that the fraction
of customers who choose the $i$-th platform’s service in the $h$-th
area, i.e., the demand for the $i$-th firm, is characterized as
\begin{equation*}
  d^{i,h} = \frac{C^h K^{i,h}}{ \bar{p} \sum_{j \in [N]} K^{j,h}}
  \left(\bar{p} - \bar p^{h} + \frac{\theta^i}{N-1} \sum_{j \in
      [N]\setminus\{i\}} p^{j,h} \right),
\end{equation*}
where $K^{i,h} > 0$ denotes the fraction of registered drivers on the
$i$-th ride-hailing platform, based on their willingness to work in
area~$h$ (properly defined in the next section), $\bar{p} >
\textrm{max}_{h \in [H]} \ \bar{p}^h$ is a maximum service price, and
$\theta^i \in [0,1]$ models the substitutability of the service
provided by each firm. The parameter $\theta^i$ being close to~$0$
means that the service of the $i$-th platform is almost independent
from the others, while being close to $1$ means that it is fully
substitutable.

The cost function of each platform can then be written as
\begin{equation}\label{eq:leader_cs}
  F^{i}(x^i,\bs x^{-i},\bs y^i)
  = \sum_{h \in [H]} (w^{i,h} K^{i,h}-p^{i,h} d^{i,h}),
\end{equation}
where $x^i=(p^i,w^i)$ with $p^i=(p^{i,h})_{h\in[H]}$ and $w^i=(w^{i,h})_{h\in[H]}$, and the relation between the followers' decision variables $\bs y^i$ and $K^{i,h}$ is explained in the next section.
The first part of the cost function amounts to the profit of the $i$-th firm to provide a service to the customers, while the second one considers the costs for providing a service to the drivers.


\subsubsection{Drivers' game}
Each platform $i\in[N]$ has $M_i$ subscribed drivers, which have the possibility to choose whether to take a ride or not. In fact, driver $j$ measures the willingness to take a certain ride $y^i_{\nu,h}$,
where $h\in[H]$ is the area of interest.
It follows that the willingness of all the drivers to work in area $h$ for platform $i$ is given by
$$K^{i,h}=\sum_{\nu\in[M_i]}y^i_{\nu,h}.$$

To select a given area, the drivers take into account
a certain preference, according to the function
\begin{equation}\label{eq_cost_pref}
  R^{\nu,h}(y^i_{\nu,h})=B^{\nu,h}y^i_{\nu,h}-A^{\nu,h}(y^i_{\nu,h})^2,
\end{equation}
where $A^{\nu,h}\in\R$ is the willingness to take a ride based on the available rides in a given distance $s^{\nu,h}$:
$$A^{\nu,h}=\frac{C^h}{s^{\nu,h}\beta}.$$
The function in \eqref{eq_cost_pref} expresses the willingness of the drivers to take a ride, namely a driver is ``forced'' to provide service if it is close to the pickup point.
The drivers then aim at maximizing, for a given area $h\in[H]$, the objective function
$$U^{\nu,h}(y^i_{\nu,h})=w^{i,h}y^i_{\nu,h}+R^{\nu,h}(y^i_{\nu,h})-Q^h\sum_{\nu\in[M_i]}y^i_{\nu,h},$$
where $Q^h\sum_{\nu\in[M_i]}y^i_{\nu,h}$ is a congestion-like term modeling the fact that other drivers might want to provide service in the same area $h$, scaled by $Q^h>0$.

Moreover, the drivers subscribed to platform $i$ collectively aim at
satisfying a minimum number of rides, e.g., imposed by the platform
itself, for a given area $h\in[H]$:
\begin{equation}\label{eq_constr_drivers}
  \sum_{\nu\in[M_i]}y^i_{\nu,h}\geq \bar d^{i,h}.
\end{equation}

By defining $y^i_\nu=(y^i_{\nu,h})_{h\in[H]}$, the optimization
problem for driver $\nu\in[M_i]$ subscribed to platform $i$ then reads
\begin{align*}
  \max_{(y^i_{\nu,h})_{h\in[H]}} \quad
  & f^i_{\nu}(x^i,y^i_\nu,\bs y^i_{-\nu})=\sum_{h\in[H]}
    U^{\nu,h}(y^i_{\nu,h})
  \\
  \st \quad
  & \eqref{eq_constr_drivers} \text{ for all } h \in [H].
\end{align*}

\subsection{Numerical results}

\begin{table}
  \caption{Simulation parameters}
  \label{tab:sim_val}
  \centering
  \begin{tabular}{llll}
    \toprule
    Parameter  & Unit & Description   & Value \\
    \midrule
    $\bar{p}$ & \$ & Maximum service price & $32$\\
    $\bar{p}_h$ & \$ & Area price cap & $\sim\mc{U}(16, 30)$\\
    $\underline{w}$ & \$ & Wage lower bound & $12$\\
    $\bar w^{i,h}$ & \$ & Wage upper bound & $\sim\mc{U}(20, 28)$\\
    $\bar \theta$ &  & Substitutability parameter & $0.9$\\
    $C_h$ &  & Area costumers' demand & $\sim\mc{U}(50, 100)$\\
    $B^{\nu,h}$ &  & Linear coefficient ($R^{\nu,h}$) & $\sim\mc{U}(6, 15)$\\
    $s^{\nu,h}$ & [m] & Driver-area distance & $\sim\mc{U}(50, 5000)$\\
    $\beta$ & & Term in $A^{\nu,h}$ & $\sim\mc{U}(0.1, 0.001)$\\
    $Q^h$ & & Congestion coefficient & $\sim\mc{U}(70, 150)$\\
    $\bar d^{i,h}$ & & Minimum number of rides & $20\% C_h$\\
    $\mc M^i_\nu$ & & Upper bound (dual variables) & $200$\\
    $\omega$ & & Scaling factor of Alg.~\ref{alg:exact_ordinal} & $0.1$\\
    \bottomrule
  \end{tabular}
\end{table}
All simulations are run in MATLAB using Gurobi \cite{gurobi} as a
solver on a laptop with an Apple M2 chip featuring an 8-core CPU and
16 GB RAM.
In our tests we have considered a problem instance with $N=5$
ride-hailing platforms, with $M_i=10$ subscribed drivers, who strive
to maximize their profit by operating in $H=3$ different areas. The
remaining numerical values are reported in
Table~\ref{tab:sim_val}. Note that, to make the game an exact
potential one, we set $\theta^i=\bar\theta$ for all
$i\in\{1,\ldots,5\}$. The potential function can then be obtained by
summing \eqref{eq:leader_cs} over all platforms, and considering just
one symmetric term at a time \cite[\S VI-A]{fabiani2023personalized}.
\begin{figure}
  \centering
  \includegraphics[width=\columnwidth]{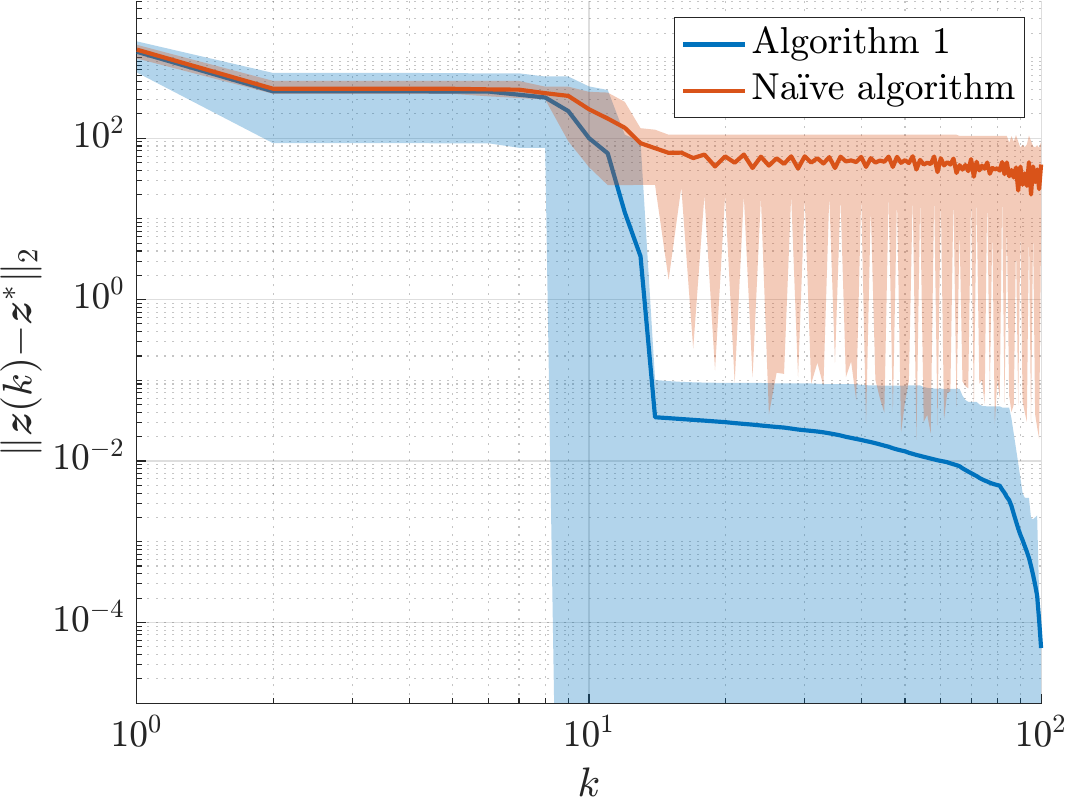}
  \caption{Performance comparison, averaged over $50$ random numerical instances.}
  \label{fig:convergence}
\end{figure}

In Fig.~\ref{fig:convergence} we compare the sequences generated by
Alg.~\ref{alg:exact_ordinal}, and a two-layer procedure
alternating the computation of a Nash equilibrium in the leaders'
game, with fixed followers' strategies, which is then fed into the
(generalized) Nash game involving the followers. In this na\"ive
algorithm we have adopted an iterative, extra-gradient type method
\cite{solodov1996modified} for computing the equilibria in both levels
(i.e., leaders' and followers' game, respectively). Specifically, we
have generated $50$ random instances of the considered game and tested
the behavior of the two procedures. In Fig.~\ref{fig:convergence} one can see that
Alg.~\ref{alg:exact_ordinal} is actually able to return an
equilibrium solution to the multi-leader-multi-follower game, in some
cases with less than $10$ iterations, where each agent takes
$0.0135$\,s to compute a \gls{BR} strategy (worst-case average over
the $50$ numerical instances). Such an equilibrium $\bs z^\ast$ has
been computed by using $\|\cdot\|_2$ as \gls{ICRF}, and its validity
has been verified ex-post by performing an ordered round of BRs. On
the other hand, the na\"ive algorithm exhibits oscillations that
increase in amplitude with~$k$, thereby keeping $\bs z(k)$ far from an
LF-NE.

\begin{table}
  \caption{Numerical results}
  \label{tab:num_results}
  \centering
  \begin{tabular}{p{12mm}ccccc}
    \toprule
    &Firm $1$  & Firm $2$ & Firm $3$ & Firm $4$ & Firm $5$ \\
    \midrule
    $F^{i,\ast}$ (\$) & $307.5$ & $241.2$ & $245.2$ & $207.4$ & $286.1$\\
    \midrule
    Demand \newline satisf. (\%) & $\begin{bmatrix}
      27.3\\
      38.0\\
      29.4\\
    \end{bmatrix}$ & $\begin{bmatrix}
      26.5\\
      38.5\\
      27.5\\
    \end{bmatrix}$ & $\begin{bmatrix}
      26.3\\
      38.5\\
      29.4\\
    \end{bmatrix}$ & $\begin{bmatrix}
      28.2\\
      33.6\\
      27.5\\
    \end{bmatrix}$ & $\begin{bmatrix}
      27.4\\
      34.7\\
      27.4\\
    \end{bmatrix}$ \\
    \bottomrule
  \end{tabular}
\end{table}

The first row in Table~\ref{tab:num_results} reports the
average profit (over the considered $50$ random instances) obtained by
the five companies at the computed LF-NE, where
$F^{i,\ast}=-F^{i}(\bar x^{i,\ast},\bar{\bs x}^{-i,\ast},\bar{\bs y}^{i,\ast})$. This is
made possible since the leaders set competitive wages so that each set
of drivers (i.e., followers) collectively allows to satisfy a certain
amount of costumers' demand per area of interest, as reported in the
second row of Table~\ref{tab:num_results}. Specifically, each quantity
there averages the value $100\times\sum_{\nu\in[M_i]}y^i_{\nu,h}/C_h$
over the $50$ numerical instances, thereby showing that the firms are
actually able to meet more than the $20\,\%$ of $C_h$ in each area, as
imposed through \eqref{eq_constr_drivers}; see the values for $\bar
d^{i,h}$ in Table~\ref{tab:sim_val}.


\section{Conclusion}
\label{sec:conclusion}

We considered a specific class of hierarchical GNEPs in which a
number of leaders play a game while being subject to equilibrium
constraints arising from the decision process of their followers.
For this multi-leader-multi-follower model, we have proposed an
equivalent reformulation that allows the leaders to solve the problem
by estimating the behavior of their followers, and exploiting the
relation between the GNEP and the KKT conditions.
To address the mixed-integer nature of the obtained reformulation and
to find a solution, we restrict the analysis to the class of potential
games, for which iterative schemes are available.
We have verified the convergence of our scheme on a numerical instance
considering a competitive ride-hail market problem.


\bibliographystyle{IEEEtran}
\bibliography{mlmf-via-gs-milp}
\end{document}